\documentclass[12pt,reqno]{amsart}

\usepackage{amssymb,latexsym}

\usepackage{enumerate}
\allowdisplaybreaks
\usepackage[french,english]{babel}
\usepackage{amsmath}
\usepackage{graphicx}
\usepackage{amssymb}
\usepackage{bbm}
\usepackage{amsthm,mathtools}
\usepackage{ulem}
\usepackage{geometry}
\usepackage{tikz-cd}
\usepackage{mathrsfs}
\usepackage[colorinlistoftodos]{todonotes}
\usepackage{enumitem}
\usepackage{verbatim}
\usepackage[foot]{amsaddr}
\usepackage{dsfont}
\usepackage{cite}
\usepackage[T1]{fontenc}

\makeatletter

\@namedef{subjclassname@2010}{
	
	\textup{2020} Mathematics Subject Classification}

\makeatother
\newtheorem{thm}{Theorem}[section]
\newtheorem*{thm*}{Theorem}

\newtheorem{lem}[thm]{Lemma}

\theoremstyle{definition}

\numberwithin{equation}{section}

\newcommand{\bg}{\big}

\newcommand{\bgg}{\bigg}

\newcommand{\Bg}{\Big}

\newcommand{\inv}{^{-1}}

\newcommand{\mbn}{\mathbb{N}}

\newcommand{\mcg}{\mathcal{G}}

\newcommand{\mmd}{\mathrm{d}}
\newcommand{\mme}{\mathrm{e}}

\newcommand{\ol}{\overline}

\usepackage{hyperref}
\hypersetup{colorlinks=true,linkcolor=blue,anchorcolor=blue,citecolor=blue}
\usepackage{color}
\newcommand{\newabstract}[1]{%
	\par\bigskip
	\csname otherlanguage*\endcsname{#1}%
	\csname captions#1\endcsname
	\item[\hskip\labelsep\scshape\abstractname.]
}

\begin{document}

\title[Extreme values of derivatives of Dirichlet $L$-functions]{Extreme values of derivatives of Dirichlet $L$-functions}

\author{Xinghua Cui\textsuperscript{1}}
\author{Zhifeng Peng\textsuperscript{1}}
\author{Yutong Song\textsuperscript{2,3}}
\author{Shengbo Zhao\textsuperscript{2}}

\address{1. School of Mathematical Sciences, Soochow University, Suzhou 215006, China}
\address{2. School of Mathematical Sciences, Key Laboratory of Intelligent Computing and Applications (Tongji University), Ministry of Education, Tongji University, Shanghai 200092, China}	
\address{3. Institute of Analysis and Number Theory, Graz University of Technology, Steyrergasse 30, 8010 Graz, Austria}

\email{20234007004@stu.suda.edu.cn}
\email{zfpeng@suda.edu.cn}

\email{99yutongsong@gmail.com}
\email{shengbozhao@hotmail.com}

	\begin{abstract} 
    In this paper, we establish lower bounds for extreme values of derivatives of Dirichlet \(L\)-functions in the range \(1/2<\sigma<1\). Compared with the work of Aistleitner, Mahatab, Munsch, and Peyrot in 2019, our result shows that derivatives of Dirichlet \(L\)-functions can attain extreme values of the same order of magnitude as the original \(L\)-functions when \(1/2<\sigma<1\) holds.
	\end{abstract}
	
    \keywords{Extreme values, Dirichlet \(L\)-functions, derivatives, resonance method. }
	
	\subjclass[2020]{Primary 11M06, 11M20, 11N37.}
	
	\maketitle

\section{Introduction}

Dirichlet \(L\)-functions are fundamental objects in analytic number theory. Their extreme values are closely connected with several important problems, including estimates for character sums, the distribution of quadratic residues, and the generalized Riemann hypothesis. Consequently, the study of their extreme values has attracted considerable attention.
\par
The study of extreme values of Dirichlet \(L\)-functions can be traced back to the work of Littlewood. Granville and Soundararajan \cite{Granville2006Ramanujan} established a sharp lower bound for Dirichlet \(L\)-functions at \(s=1\). They showed that for any \(A \ge 10\) and any sufficiently large prime \(q\), there exist at least \(q^{1-1/A}\) characters \(\chi \pmod{q}\) such that
\[
|L(1,\chi)| \ge \mme^\gamma\bg(\log\log q+\log\log\log q-\log\log\log\log q-\log A+O(1)\bg).
\]
Here, \(\gamma\) denotes the Euler–Mascheroni constant. Subsequently, Aistleitner, Mahatab, Munsch, and Peyrot \cite{Aistleitner2019QJMath} sharpened this result by proving that, for sufficiently large primes \(q\), there exists a non-principal character \(\chi \pmod q\) such that
\[
    |L(1,\chi)| \ge \mme^\gamma\bg(\log\log q+\log\log\log q-C+o(1)\bg),
\]
where \(C=1+\log\log 4 \approx 1.33\). This is currently the best known result. Moreover, they obtained a probabilistic result concerning the characters \(\chi\) for which such extreme values occur. Moreover, for any fixed \(1/2<\sigma<1\), they proved that there exists a non-principal character \(\chi \pmod q\) such that 
\begin{align}\label{ammp}
    \log |L(\sigma,\chi)| \ge C(\sigma)\frac{(\log q)^{1-\sigma}}{(\log\log q)^{\sigma}}
\end{align}
for some constant \(C(\sigma)>0\).
\par
D. Yang \cite{yang2024BLMS} studied in depth extreme values of derivatives of Dirichlet \(L\)-functions at \(s=1\). By combining estimates for smooth numbers with the resonance method, he proved that, for sufficiently large primes \(q\), 
\[
\max_{\substack{\chi \neq \chi_0 \\ \chi \pmod q}} \bg|L^{(\ell)}(1,\chi) \bg| \ge \bg(Y_\ell+o(1)\bg) (\log\log q)^{\ell+1}
\]
uniformly holds for all positive integers \(\ell \le (\log\log\log q)/(\log\log\log\log q)\). Here, \(Y_\ell = \int_0^\infty u^\ell \rho(u)\mmd u\) and \(\rho(u)\) denotes the Dickman function. For studies on the extreme values of derivatives of Dirichlet \(L\)-functions and related functions, see \cite{dong2023BAustMS,dong2023Onde,yang2022extreme,YANG2026jmaa}.
\par
The resonance method is currently the main tool for investigating extreme values. Its basic idea originates from Voronin’s work \cite{voronin1988lower} in 1988. Soundararajan \cite{soundararajan2008extreme} subsequently improved Voronin’s method and made it more concise and efficient. Further refinements were introduced by Aistleitner \cite{aistleitner2016lower}, Bondarenko and Seip \cite{bondarenko2017Duke,bondarenko2018MathAnn}, and several other authors. For more details on the resonance method, we recommend \cite{aistleitner2019IMRN,chirre2019extreme,YANG2026jnt} and the references therein.
\par
Inspired by D. Yang's work \cite{yang2024BLMS}, we study extreme values of derivatives of Dirichlet \(L\)-functions for \(1/2<\sigma<1\). By applying the long resonator method, we obtain the following theorem:

\begin{thm}
\label{thm1}
    Fix \(\ell \in \mbn\) and \(\sigma \in (1/2,1)\). Then there exists a constant $c_{\sigma,\ell}>0$ such that, for every sufficiently large prime $q$, one can find a non-principal Dirichlet character $\chi \pmod q$ satisfying
    \[
    \bg| L^{(\ell)}(\sigma,\chi) \bg| \ge \exp \bgg(c_{\sigma,\ell} \frac{(\log q)^{1-\sigma}}{(\log\log q)^{\sigma}} \bgg).
    \]
\end{thm}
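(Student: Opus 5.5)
We will use the long resonator method of Aistleitner–Mahatab–Munsch–Peyrot, replacing $L(\sigma,\chi)$ by $L^{(\ell)}(\sigma,\chi)$, i.e.\ by the Dirichlet series $(-1)^\ell\sum_n \chi(n)(\log n)^\ell n^{-\sigma}$. We take a multiplicative resonator $R(\chi)=\sum_{m\in\mathcal M} r(m)\chi(m)$. Here $\mathcal M$ is a set of integers supported on primes in a range $P\le p\le P^{\,c}$, with $P\approx \log q\log\log q$ (suitably scaled). The weights are $r(p)=\frac{(\log q)^{1-\sigma}}{(\log\log q)^{1-\sigma}}\frac{1}{p^{\sigma}\log p}$, essentially as in the earlier paper, and $\mathcal M$ is restricted so that $|\mathcal M|\le q^{1/2}$ or so, which keeps off-diagonal terms controlled. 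The idea is to compare
\[
S_1=\sum_{\chi\neq\chi_0} L^{(\ell)}(\sigma,\chi)|R(\chi)|^2,\qquad S_2=\sum_{\chi} |R(\chi)|^2,
\]
so that $\max_{\chi\ne\chi_0}|L^{(\ell)}(\sigma,\chi)|\ge |S_1|/S_2$.

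First we replace $L^{(\ell)}(\sigma,\chi)$ by a short approximation $\sum_{n\le q^{A}}\chi(n)(-\log n)^\ell n^{-\sigma}$. Partial summation with Pólya–Vinogradov (or Burgess), valid for nonprincipal $\chi$, gives an error $O(q^{1/2-A\sigma+\epsilon})$, which is negligible. Since $|R(\chi)|^2\ge0$, the error contributes at most $O(q^{-\delta})S_2$. By orthogonality, the main term equals $\varphi(q)\sum_{m,n}\sum_{k: mk\equiv n}(\log k)^\ell k^{-\sigma}r(m)\overline{r(n)}$ minus the principal-character term. Following AMMP, we restrict to positive $r$. Every term in the character sum over $\chi\bmod q$ is then nonnegative, because $(-1)^\ell$ is a global sign and $(\log k)^\ell\ge0$. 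So we may drop all solutions of $mk\equiv n\pmod q$ except the genuine ones $mk=n$. This positivity is the key point making the derivative no harder than the function itself. The principal-character contribution is bounded trivially: $|L^{(\ell)}(\sigma,\chi_0)$-part$|\cdot|R(\chi_0)|^2\le q^{A(1-\sigma)}(\log q)^\ell(\sum r(m))^2$, and this is made small relative to $\varphi(q)\sum r(m)^2$ by keeping $\mathcal M$ small.

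The core estimate is the lower bound
\[
\frac{\sum_{m}\sum_{k\mid m} r(m)r(m/k)(\log k)^\ell k^{-\sigma}}{\sum_m r(m)^2}\ \ge\ \exp\Big(c\frac{(\log q)^{1-\sigma}}{(\log\log q)^{\sigma}}\Big).
\]
Without the factor $(\log k)^\ell$ this is the AMMP computation, giving $\prod_p(1+r(p)p^{-\sigma})$ up to truncation losses. To handle $(\log k)^\ell$, note that for $k>1$ supported on primes $\ge P$ we have $\log k\ge \log P\ge 1$. So $(\log k)^\ell\ge1$ whenever $k\ne1$. Hence the sum is at least the AMMP sum minus the $k=1$ term, and the $k=1$ term equals exactly $\sum_m r(m)^2$, i.e.\ contributes $1$ to the ratio. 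The ratio for $L$ itself is $\exp(c'(\log q)^{1-\sigma}/(\log\log q)^{\sigma})$, which is huge, so subtracting $1$ is harmless and any $c_{\sigma,\ell}<c'$ works.

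The main obstacle is the bookkeeping in the truncation: $\mathcal M$ must be cut off, as in AMMP, through a set of integers whose number of prime factors is bounded via Rankin's trick, so that $|\mathcal M|$ is small enough for the principal character and the approximation errors while the ratio loses only a factor $1+o(1)$. One must also check that the extra factor $(\log q)^{\ell}$ appearing in the error terms is absorbed; it is only polynomial in $\log q$, against an exponential main term. We would import the AMMP choice of parameters verbatim and verify these two error terms.
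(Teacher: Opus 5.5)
Your numerator step is fine. With $r\ge 0$, every term of $(-1)^\ell\sum_{\chi}P(\chi)|R(\chi)|^2=\varphi(q)\sum_k\frac{(\log k)^\ell}{k^\sigma}\sum_{mk\equiv n}r(m)r(n)$ is nonnegative, so you may keep only $mk=n$. The gap is in the \emph{denominator}. By orthogonality
\[
\sum_{\chi \bmod q}|R(\chi)|^2=\varphi(q)\sum_{\substack{m,n\in\mathcal M\\ m\equiv n \pmod q}}r(m)r(n),
\]
but your ``core estimate'' replaces this with $\varphi(q)\sum_m r(m)^2$. That discards the off-diagonal pairs $m\equiv n$, $m\neq n$, at a point where you need an \emph{upper} bound, so positivity works against you.

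These pairs are really there. A long resonator is one whose elements are products of many primes of size about $\log q\log\log q$, far larger than $q$. The bound $|\mathcal M|\le q^{1/2}$ does not control them: cardinality says nothing about collisions modulo a particular prime $q$. The heuristic count $|\mathcal M|^2/q$ assumes an equidistribution of $\mathcal M$ mod $q$ that you cannot prove for every large $q$. This is why, in the zeta setting, Aistleitner regroups the resonator coefficients over short intervals before using positivity; Rankin's trick alone does not handle it.

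Also note that what you describe is Aistleitner's zeta resonator, not the AMMP one, and your weights are too small. For $p\ge\log q\log\log q$ they give $\sum_p r(p)p^{-\sigma}\asymp(\log q)^{2-3\sigma}(\log\log q)^{-2-\sigma}$, which is $o\bigl((\log q)^{1-\sigma}(\log\log q)^{-\sigma}\bigr)$. The normalization must make $r(p)\asymp 1$ near $p\approx\log q\log\log q$.

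The paper never has to estimate off-diagonal terms. Following AMMP, it takes $r$ completely multiplicative on \emph{all} integers, with $r(p)=1/2$ for $p\le Y=\eta\log q\log\log q$. So $R(\chi)=\prod_{p\le Y}(1-\chi(p)/2)^{-1}$, with no truncation. In the numerator keep only $k\mid n$ and write $n=kn'$. Then $r(n)=r(k)r(n')$, and since $q$ is prime and $q\nmid k$ whenever $r(k)\neq 0$, we get $mk\equiv kn'\iff m\equiv n'\pmod q$. Hence the numerator is at least $\sum_{k\le q}(\log k)^\ell r(k)k^{-\sigma}$ times the \emph{full} denominator, off-diagonal terms included. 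The principal character is then handled by $|R(\chi_0)|^2=4^{\pi(Y)}=q^{\eta\log 4+o(1)}$ against $S_1\ge\varphi(q)$, with $\eta$ small.

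If you want to keep a finite $\mathcal M$, you can repair your route by grouping over residue classes: set $r'(a)=\bigl(\sum_{m\in\mathcal M,\ m\equiv a}r(m)^2\bigr)^{1/2}$ and $R(\chi)=\sum_a r'(a)\chi(a)$. This makes the denominator exactly $\varphi(q)\sum_m r(m)^2$. Cauchy--Schwarz then gives $r'(a)r'(ak)\ge\sum_{m\equiv a,\ mk\in\mathcal M}r(m)r(mk)$ for the numerator. The principal character is bounded by $(\sum_a r'(a))^2\le|\mathcal M|\sum_m r(m)^2$, which is where $|\mathcal M|\le q^{1/2}$ is genuinely needed. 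Some such step is indispensable.

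Your treatment of $(\log k)^\ell$ is a valid substitute for the paper's use of $r(2)=1/2$: it is $\ge 1$ for $k>1$ built from primes $\ge P$.
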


\par
In the case \(\ell=0\), Theorem \ref{thm1} coincides with \eqref{ammp}, and hence recovers the result of Aistleitner et al. \cite{Aistleitner2019QJMath}. A similar result on the extreme values of derivatives of the Riemann zeta function can be found in \cite[Theorem 2(B)]{yang2022extreme}, and the present work generalizes that result to Dirichlet \(L\)-functions.
\par
This paper is organized as follows. In Section \ref{sec-pre}, we introduce the resonator and establish a truncation formula for derivatives of Dirichlet \(L\)-functions. In Section \ref{sec-prove}, we apply the long resonator method to prove Theorem \ref{thm1}.

\section{Preliminaries}
\label{sec-pre}
In this section, we carry out some preliminary work. We begin with the following asymptotic formula, which shows that derivatives of Dirichlet \(L\)-functions can be effectively approximated by a truncated Dirichlet series.

\begin{lem}
\label{lem1}
    Fix \(\ell \in \mbn_+\) and \(\sigma \in (1/2,1)\). When \(q\) is sufficiently large, for any non-principal character \(\chi\pmod {q}\), we have
    \[
    (-1)^\ell  L^{(\ell)}(\sigma,\chi) = \sum_{n \le q}\frac{(\log n )^\ell \chi(n)}{n^\sigma} + O_{\sigma,\ell} \bg( q^{\frac{1}{2}-\sigma}(\log q)^{\ell+1}\bg).
    \]
\end{lem}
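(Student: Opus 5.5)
Since $\chi$ is non-principal, the Dirichlet series $L(s,\chi)=\sum_{n\ge1}\chi(n)n^{-s}$ converges for $\operatorname{Re}s>0$. Differentiating termwise $\ell$ times gives
\[
(-1)^\ell L^{(\ell)}(\sigma,\chi)=\sum_{n\ge 1}\frac{(\log n)^\ell\chi(n)}{n^\sigma},
\]
which converges conditionally for $\sigma\in(1/2,1)$. Termwise differentiation is legitimate: the series converges uniformly on compact subsets of $\operatorname{Re}s>0$ because the partial sums of $\chi$ are bounded. The lemma therefore reduces to bounding the tail
\[
T=\sum_{n>q}\frac{(\log n)^\ell\chi(n)}{n^\sigma}.
\]

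I will treat $T$ by partial summation, using the Pólya--Vinogradov inequality
\[
S(x)=\sum_{n\le x}\chi(n)\ll q^{1/2}\log q,
\]
valid uniformly in $x$. Put $f(t)=(\log t)^\ell t^{-\sigma}$. For $t\ge q$ this function is eventually decreasing, since $\ell$ and $\sigma$ are fixed and $q$ is large. Its derivative is
\[
f'(t)=t^{-\sigma-1}(\log t)^{\ell-1}(\ell-\sigma\log t),
\]
so $|f'(t)|\ll_{\sigma,\ell} t^{-\sigma-1}(\log t)^\ell$. Abel summation gives
\[
T=-S(q)f(q)-\int_q^\infty S(t)f'(t)\,\mmd t,
\]
where the boundary term at infinity vanishes because $S$ is bounded and $f(t)\to0$.

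The boundary term is $\ll q^{1/2}\log q\cdot q^{-\sigma}(\log q)^\ell$, which is exactly $q^{1/2-\sigma}(\log q)^{\ell+1}$. For the integral, the total variation of $f$ on $[q,\infty)$ is $f(q)$ because $f$ is monotone there. Hence the integral is bounded by
\[
\sup|S|\cdot f(q)\ll q^{1/2}\log q\cdot q^{-\sigma}(\log q)^\ell.
\]
Alternatively, one can bound $\int_q^\infty t^{-\sigma-1}(\log t)^\ell\,\mmd t\ll_{\sigma,\ell} q^{-\sigma}(\log q)^\ell$ directly. Either way the integral is of the same order as the boundary term, and the lemma follows.

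No step is a serious obstacle. The only point needing care is the justification of the series representation of $L^{(\ell)}$ inside the critical strip, which comes from locally uniform convergence as noted above. The Pólya--Vinogradov input is exactly what produces the extra factor $\log q$ in the error term.
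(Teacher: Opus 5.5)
Your proof is correct and follows essentially the same route as the paper: split the Dirichlet series for $(-1)^\ell L^{(\ell)}(\sigma,\chi)$ at $q$, then bound the tail by partial summation against $f(t)=(\log t)^\ell t^{-\sigma}$, using the P\'olya--Vinogradov bound $\sum_{n\le t}\chi(n)\ll\sqrt{q}\log q$ together with $|f'(t)|\ll_{\sigma,\ell}(\log t)^\ell t^{-\sigma-1}$. Your extra justifications are sound additions: locally uniform convergence justifies termwise differentiation, and monotonicity of $f$ on $[q,\infty)$ gives an alternative bound for the integral. Your sign $-\int_q^\infty S(t)f'(t)\,\mmd t$ in the Abel summation is the correct one; the paper's $+$ is a harmless slip.
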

\begin{proof}
    Trivially, we have
\begin{align}
    \label{lem1-dirichlet}
    (-1)^\ell  L^{(\ell)}(\sigma,\chi) = \sum_{n\in \mbn} \frac{(\log n )^\ell \chi(n)}{n^\sigma} = \Bg(\sum_{n \le q} +\sum_{n > q}\Bg) \frac{(\log n )^\ell \chi(n)}{n^\sigma}.
\end{align}
Set
\[
A_\chi (t) = \sum_{n \le t} \chi(n) ~\text{and}~ f(t) = \frac{(\log t)^\ell}{t^\sigma}.
\]
The P\'olya-Vinogradov inequality gives, uniformly for \(t \ge 1\), 
\begin{align}
    \label{lem1-pv}
    |A_\chi (t)| \ll \sqrt{q}\log q.
\end{align}
Hence, an application of partial summation shows that the tail sum on the right-hand side of \eqref{lem1-dirichlet} satisfies
\begin{align}
    \label{lem1-tail}
    \sum_{n > q}\frac{(\log n )^\ell \chi(n)}{n^\sigma} = -A_\chi (q) f(q) + \int_{q}^\infty A_\chi(t) f^\prime (t) \mmd t. 
\end{align}
Noting that
\[
\bg|f^\prime(t)\bg| \ll_{\sigma,\ell} \frac{(\log t)^\ell}{t^{\sigma+1}}.
\]
Combining this with \eqref{lem1-pv} and \eqref{lem1-tail}, we obtain
\[
\sum_{n > q}\frac{(\log n )^\ell \chi(n)}{n^\sigma} \ll_{\sigma,\ell} q^{\frac{1}{2}-\sigma}(\log q)^{\ell+1}.
\]
Substituting this into \eqref{lem1-dirichlet} completes the proof of Lemma \ref{lem1}.
\end{proof}

\par
Next, we construct the resonator \(R(\chi)\). Define \(Y= \eta \log q \log\log q\), where \(0 < \eta < (2\log 4)\inv \sigma\). As in \cite{Aistleitner2019QJMath}, define \(r(n)\) to be a completely multiplicative function whose values at primes \(p\) are given by
	\[
	r(p)=
	\begin{cases}
		1/2, & p\leq Y,\\
		0, & p>Y.
	\end{cases}
	\]
Furthermore, define the resonator
\[
R(\chi) = \sum_{n \in \mbn}r(n)\chi(n).
\]
Since \(r(n)\) is multiplicative, \(R(\chi)\) can be written in the following Euler product form:
\[
R(\chi) = \prod_{p \le Y} \bg(1-r(p)\chi(p) \bg)\inv = \prod_{p \le Y} \Bg(1-\frac{\chi(p)}{2} \Bg)\inv.
\]

\section{Proof of Theorem \ref{thm1}}
\label{sec-prove}

In this section, we prove Theorem \ref{thm1} using the long resonator method. Let \(\mcg_q\) denote the set of all Dirichlet characters \(\chi\) modulo \(q\), and write
\[
P(\chi) : = P_\ell(\chi) = \sum_{n \le q}\frac{(\log n )^\ell \chi(n)}{n^\sigma}.
\]
Furthermore, define the following two sums:
\begin{align*}
    & S_1 := S_1(R,q) = \sum_{\chi \in \mcg_q}|R(\chi)|^2, \\
    & S_2 := S_2(R,q) = \sum_{\chi \in \mcg_q}P(\chi) |R(\chi)|^2.
\end{align*}

\par
For \(S_1\), we expand \(|R(\chi)|^2\) and apply the orthogonality of Dirichlet characters, which gives 
\begin{align}
\label{S1eq}
    S_1 = \sum_{\chi \in \mcg_q} \sum_{m,n \ge 1} r(m)r(n)\chi(m)\ol\chi(n) = \phi(q)\sum_{m \equiv n \pmod q} r(m)r(n),
\end{align}
where \(\phi(n)\) is Euler’s totient function. As \(r(1)=1\), retaining only the term \(m=n=1\) and using the classical lower bound for Euler’s totient function, we obtain the following crude lower bound for \(S_1\):
\begin{align}
    \label{S1lower}
    S_1 \ge q^{1-o(1)}.
\end{align}

\par
For the principal character \(\chi_0\), it follows from the prime number theorem that
\[
|R(\chi_0)|^2 = 2^{2\pi(Y)} = q^{\eta\log 4+o(1)}.
\]
Together with
\[
|P(\chi_0)| \ll_{\sigma,\ell} \sum_{n \le q} \frac{(\log n)^\ell}{n^\sigma} \ll_{\sigma,\ell} q^{1-\sigma}(\log q )^\ell,
\]
this yields
\[
|P(\chi_0)||R(\chi_0)|^2 \ll_{\sigma,\ell} q^{1-\sigma+\eta\log 4+o(1)}.
\]
It follows from \(0 < \eta < (2\log 4)\inv \sigma\) that 
\[
1-\sigma+\eta\log4 <1.
\]
Therefore, the contribution of the principal character \(\chi_0\) is negligible compared with the lower bound for \(S_1\) given in \eqref{S1lower}.
\par
Denote by \(\mathcal{G}_q^\ast\) the set of all non-principal Dirichlet characters modulo \(q\), that is, \(\mcg_q^\ast = \mcg_q \setminus \{\chi_0\}\). We then define the following two sums:
\begin{align*}
    & S_1^\ast := S_1^\ast(R,q) = \sum_{\chi \in \mcg_q^\ast}|R(\chi)|^2, \\
    & S_2^\ast := S_2^\ast(R,q) = \sum_{\chi \in \mcg_q^\ast}P(\chi) |R(\chi)|^2.
\end{align*}
Combining \(S_1^\ast \le S_1\) and \eqref{S1lower}, we have
\[
\frac{S_2^\ast}{S_1^\ast} \ge \frac{S_2}{S_1} + O\bg(q^{-\sigma+\eta\log 4+o(1)} \bg).
\]
Thus,
\begin{align}
    \label{max1}
    \max_{\chi\in \mcg_q^\ast} |P(\chi)| \ge \frac{S_2}{S_1} + O\bg(q^{-\sigma+\eta\log 4+o(1)} \bg).
\end{align}
\par
We now establish a lower bound for the ratio \(S_2/S_1\). To this end, expanding \(P(\chi)\) and \(|R(\chi)|^2\) in \(S_2\), we obtain that 
\begin{align*}
    S_2 &\, = \sum_{\chi \in \mcg_q}\sum_{k \le q}\frac{(\log k )^\ell \chi(k)}{k^\sigma} \sum_{m,n\ge 1} r(m)r(n)\chi(m)\ol\chi(n)  \\
    &\, \sum_{k \le q}\frac{(\log k )^\ell}{k^\sigma}\sum_{m,n\ge 1} r(m)r(n)  \sum_{\chi \in \mcg_q}\chi(k)\chi(m)\ol\chi(n).
\end{align*}
The orthogonality of Dirichlet characters implies that
\[
    S_2 = \sum_{k \le q}\frac{(\log k )^\ell}{k^\sigma} \phi(q) \sum_{\substack{m,n \ge 1 \\ mk \equiv n\pmod q}} r(m)r(n).
\]
Since all terms on the right-hand side above are non-negative, we may retain only those with \(k\mid n\), which gives the following crude lower bound for \(S_2\):
\begin{align}
    \label{S2lower}
    S_2 \ge \phi(q)\sum_{k \le q}\frac{(\log k )^\ell}{k^\sigma}r(k)\sum_{\substack{m,n \ge 1 \\ mk \equiv nk \pmod q}} r(m)r(n).
\end{align}
Combining the fact that \(q\) is prime with the argument in \cite[p. 10]{yang2023omega}, we obtain
\[
\phi(q)\sum_{\substack{m,n \ge 1 \\ mk \equiv nk \pmod q}} r(m)r(n) = \phi(q)\sum_{\substack{m,n \ge 1 \\ m \equiv n \pmod q}} r(m)r(n).
\]
Substituting this into \eqref{S2lower} and combining with \eqref{S1eq}, we obtain
\begin{align}
    \label{S2S1lower}
    \frac{S_2}{S_1} \ge \sum_{k \le q}\frac{(\log k )^\ell}{k^\sigma}r(k) =: Q_\ell(q).
\end{align}

\par
If \(k > q\), then \(k^{-\sigma} \le q^{-\sigma/2} k^{-\sigma/2}\), hence
\[
 \sum_{k > q}\frac{(\log k )^\ell}{k^\sigma}r(k) \le q^{-\sigma/2} \sum_{k \in \mbn}\frac{(\log k )^\ell}{k^{\sigma/2}}r(k).
\]
The sum on the right-hand side of the above formula is bounded by
\[
\sum_{k=1}^{\infty}\frac{r(k)}{k^{\sigma/4}}=\prod_{p\leq Y}\Bg(1-\frac{1}{2p^{\sigma/4}}\Bg)^{-1}.
\]
The prime number theorem gives that 
\[
\log \prod_{p\leq Y}\Bg(1-\frac{1}{2p^{\sigma/4}}\Bg)^{-1}\ll_{\sigma}\sum_{p\leq Y}p^{-\frac{\sigma}{4} }\ll_\sigma\frac{Y^{1-\sigma/4}}{\log Y}=o(\log q),
\]
thus, 
\[
\sum_{k \in \mbn}\frac{(\log k )^\ell}{k^{\sigma/2}}r(k) \le q^{o(1)}.
\]
The above argument shows that
\begin{align}
    \label{Qeq}
    Q_\ell(q) = \sum_{k \in \mbn}\frac{(\log k )^\ell}{k^\sigma}r(k) -o(1).
\end{align}
Since $Y\ge 2$ holds for sufficiently large $q$, we have $r(2)=1/2$. Thus, 
\[
\sum_{k\in\mbn}\frac{(\log k)^\ell}{k^\sigma}r(k)
	\geq\sum_{m\in\mbn}r(2m)\frac{(\log 2m)^\ell}{(2m)^\sigma}\geq\frac{(\log 2)^\ell}{2^{\sigma+1}}\sum_{m\in\mbn}\frac{r(m)}{m^\sigma}.
\]
Here, we employ the fact that \(r(n)\) is a completely multiplicative function. Combining this with \eqref{Qeq}, we obtain the following lower bound for \(Q_\ell(q)\):
\begin{align}
    \label{Qlower}
    Q_\ell(q) \gg \sum_{k \in \mbn} \frac{r(k)}{k^\sigma}.
\end{align}

\par
Set 
\[
F(\sigma) := \sum_{k \in \mbn} \frac{r(k)}{k^\sigma}.
\]
Then, by the definition of $r(n)$, we have
\[
F(\sigma)=\prod_{p\leq Y}\Bg(1-\frac{1}{2p^\sigma}\Bg)^{-1}.
\] 
Thus, 
\begin{align*}
    		\log F(\sigma)&=-\sum_{p\leq Y}\log\Bg(1-\frac{1}{2p^\sigma}\Bg)
		\\&=\sum_{p\leq Y}\frac{1}{2p^\sigma}+O\Bg(\sum_{p\leq Y}\frac{1}{p^{2\sigma}}\Bg)
		\\&=\frac 12\sum_{p\leq Y}\frac{1}{p^\sigma}+O_\sigma(1).
\end{align*}
The prime number theorem shows that when \(1/2<\sigma<1\), one has
\[
\sum_{p \le Y} \frac{1}{p^\sigma}= \Bg(\frac{1}{1-\sigma}+o(1)\Bg) \frac{Y^{1-\sigma}}{\log Y}.
\]
Recalling that \(Y= \eta \log q \log\log q\), we get
\begin{align}
    \label{Feq}
    \log F(\sigma) = \Bg(\frac{\eta^{1-\sigma}}{2(1-\sigma)}+o(1)\Bg)  \frac{(\log q)^{1-\sigma}}{(\log\log q)^{\sigma}}.
\end{align}
Combining \eqref{max1}, \eqref{S2S1lower}, \eqref{Qlower} and \eqref{Feq}, and using Lemma \ref{lem1}, we complete the proof of Theorem \ref{thm1}.

	\bibliographystyle{siam}
    \bibliography{reference}
\end{document}